\def\eps{{\varepsilon}}
\providecommand{\remove}[1]{}
\newcommand{\HD}{\ensuremath{\mathsf{HD}}\xspace}
\renewcommand{\Re}{\mathbb{R}}
\newcommand{\D}{\mathcal{D}}
\newcommand{\C}{\mathcal{C}}
\newcommand{\F}{\mathcal{F}}
\newcommand{\G}{\mathcal{G}}
\newcommand{\h}{\mathcal{H}}
\newtheorem{theorem}{Theorem}[section]
\newtheorem{lemma}[theorem]{Lemma}
\newtheorem{proposition}[theorem]{Proposition}
\newtheorem{definition}[theorem]{Definition}
\newtheorem{remark}[theorem]{Remark}
\newtheorem*{theorem*}{Theorem}
\newtheorem*{lemma*}{Lemma}
\newtheorem*{proposition*}{Proposition}
\newtheorem*{example*}{Example}
\begin{document}
\title{On Piercing Numbers of Families Satisfying the $(p,q)_r$ Property}

\author{Chaya Keller\thanks{Department of Mathematics, Ben-Gurion University of the NEGEV, Be'er-Sheva Israel. \texttt{kellerc@math.bgu.ac.il}. Research partially supported by Grant 635/16 from the Israel Science Foundation, the Shulamit Aloni Post-Doctoral Fellowship of the Israeli Ministry of Science and Technology, and by the Kreitman Foundation Post-Doctoral Fellowship.}
\and
Shakhar Smorodinsky\thanks{Department of Mathematics, Ben-Gurion University of the NEGEV, Be'er-Sheva Israel.
\texttt{shakhar@math.bgu.ac.il}. Research partially supported by Grant 635/16 from the Israel Science Foundation.}
}

\date{}
\maketitle

\begin{abstract}
The Hadwiger-Debrunner number $\HD_d(p,q)$ is the minimal size of a piercing set that can always be guaranteed for a family of compact convex sets in $\Re^d$ that satisfies the $(p,q)$ property. Hadwiger and Debrunner showed that $\HD_d(p,q) \geq p-q+1$ for all $q$, and equality is attained for $q > \frac{d-1}{d}p +1$. Almost tight upper bounds for $\HD_d(p,q)$ for a `sufficiently large' $q$ were obtained recently using an enhancement of the celebrated Alon-Kleitman theorem, but no sharp upper bounds for a general $q$ are known.

In~\cite{MS11}, Montejano and Sober\'{o}n defined a refinement of the $(p,q)$ property: $\F$ satisfies the $(p,q)_r$
property if among any $p$ elements of $\F$, at least $r$ of the $q$-tuples intersect. They showed that $\HD_d(p,q)_r \leq p-q+1$ holds for all $r>{{p}\choose{q}}-{{p+1-d}\choose{q+1-d}}$; however, this is far from being tight.

In this paper we present improved asymptotic upper bounds on $\HD_d(p,q)_r$ which hold when only a tiny portion of the $q$-tuples intersect. In particular, we show that for $p,q$ sufficiently large, $\HD_d(p,q)_r \leq p-q+1$ holds with $r = \frac{1}{p^{\frac{q}{2d}}}{{p}\choose{q}}$. Our bound misses the known lower bound for the same piercing number by a factor of less than $pq^d$.

Our results use Kalai's Upper Bound Theorem for convex sets, along with the Hadwiger-Debrunner theorem and the recent improved upper bound on $\HD_d(p,q)$ mentioned above.
\end{abstract}

\section{Introduction}
\label{sec:introduction}

Throughout this paper, $\F$ denotes a finite family of compact convex sets in $\Re^d$, $p,q \in \mathbb{N}$ satisfy $p \geq q \geq d+1$, and $|\F| \geq p$. $\F$ is said to satisfy the $(p,q)$ property if among any $p$ elements of $\F$ there is a $q$-tuple with a non-empty intersection. We say that $\F$ is \emph{pierced} by $S \subset \Re^d$ if any $A \in \F$ satisfies $A \cap S \neq \emptyset$. The smallest cardinality of a set that pierces $\F$ is called the \emph{piercing number} of $\F$. We call $\F$ \emph{$t$-degenerate} if all elements of $\F$ except at most $t$ can be pierced by a single point. Otherwise, $\F$ is called \emph{non-$t$-degenerate}.

\medskip The classical Helly's theorem asserts that if $\F$ satisfies the $(d+1,d+1)$ property (namely, if any $d+1$ elements of $\F$ have a non-empty intersection), then the piercing number of $\F$ is $1$.

In 1957, Hadwiger and Debrunner~\cite{HD57} considered a natural generalization of Helly's theorem to $(p,q)$ properties. Let $\HD_d(p,q)$ be the maximum piercing number taken over all families $\F$ of at least $p$ compact convex sets in $\Re^d$ that satisfy the $(p,q)$ property. Is $\HD_d(p,q)$ necessarily bounded for all $p \geq q \geq d+1$? (It is easy to see that $\HD_d(p,q)$ can be unbounded for $q \leq d$.)

Hadwiger and Debrunner showed that for all $p \geq q \geq d+1$ we have $\HD_d(p,q) \geq p-q+1$, and that equality is attained for any $(p,q)$ such that $q > \frac{d-1}{d}p +1$ (and in particular, in $\Re^1$ equality is attained for all $p \geq q \geq 2$). In a celebrated result from 1992, Alon and Kleitman~\cite{AK92} proved the Hadwiger-Debrunner conjecture, obtaining the upper bound $\HD_d(p,q) = \tilde{O}(p^{d^2+d})$. However, as mentioned in~\cite{AK92}, this bound is very far from being tight. The best currently known lower bound (implicitly implied by a result of Bukh et al.~\cite{BMN11}), is $\HD_d(p,q) = \Omega \left( \frac{p}{q} \log^{d-1} \frac{p}{q} \right)$.

Since the Alon-Kleitman theorem, several papers aimed at obtaining improved bounds on $\HD_d(p,q)$ for various values of $(p,q,d)$. The most notable result of this kind is by Kleitman et al.~\cite{KGT01} who showed that $\HD_2(4,3) \leq 13$ (compared to the upper bound of 345 obtained in~\cite{AK92}). Recently, it was shown in~\cite{KST16} that $\HD_d(p,q) \leq p-q+2$ for all $\eps>0$, $p \geq p_0(\eps)$, and $q>p^{\frac{d-1}{d}+\eps}$. The best currently known upper bound that holds for all $q$, $\HD_d(p,q) = \tilde{O}(p^{d \cdot \frac{q-1}{q-d}})$ (also shown in~\cite{KST16}), is apparently far from being tight.

In an attempt to obtain improved bounds on $\HD_d(p,q)$ by refining the $(p,q)$ property, Montejano and Sober\'{o}n~\cite{MS11} introduced the following definition: A family $\F$ is said to satisfy the $(p,q)_r$ property if among any $p$ elements of $\F$, at least $r$ of the $q$-tuples intersect. $\HD_d(p,q)_r$ is defined as the maximal piercing number taken over all families that satisfy the $(p,q)_r$ property. The main result of~\cite{MS11} is:
\begin{theorem}[\cite{MS11}]
\label{Thm:MS}
For any $d$,
\begin{equation}\label{Eq:Basic}
\HD_d(p,q)_r \leq p-q+1
\end{equation}
holds for all $r>{{p}\choose{q}}-{{p+1-d}\choose{q+1-d}}$.
\end{theorem}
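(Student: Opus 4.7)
The plan is to induct on the dimension $d$. For the base case $d=1$, the hypothesis $r > \binom{p}{q} - \binom{p}{q} = 0$ reduces to the ordinary $(p,q)$ property on $\Re^1$, and the classical Hadwiger-Debrunner equality $\HD_1(p,q) = p - q + 1$ gives the conclusion.

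For the inductive step, I would first unpack the hypothesis combinatorially. The bound $r > \binom{p}{q} - \binom{p+1-d}{q+1-d}$ says that in every $p$-subfamily of $\F$ fewer than $\binom{p+1-d}{q+1-d}$ of the $q$-tuples are non-intersecting. Since $\binom{p+1-d}{q+1-d}$ is precisely the number of $q$-supersets of any fixed $(d-1)$-subset of a $p$-set, this forces: for every $(d-1)$-subset $X \subseteq \F$ and every $p$-subfamily $\F_0 \supseteq X$, at least one $q$-tuple of $\F_0$ containing $X$ has a common point. Writing $K_X := \bigcap X$, this translates into the statement that the restricted family $\F|_{K_X} := \{C \cap K_X : C \in \F \setminus X\}$ satisfies the classical $(p-d+1,\,q-d+1)$ property. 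Moreover, any piercing set of $\F|_{K_X}$ contained in $K_X$ automatically pierces $X$ as well, so $\tau(\F) \leq \tau(\F|_{K_X})$.

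The last step is to bound $\tau(\F|_{K_X}) \leq p - q + 1$. I would try to choose $X$ so that $K_X$ is contained in an affine hyperplane $H \cong \Re^{d-1}$ (for instance, by picking $d-1$ sets whose intersection is degenerate, or by restricting to a supporting hyperplane of $K_X$), and then apply the inductive hypothesis of the theorem to $\F|_{K_X}$ regarded as a family in $\Re^{d-1}$. The output would be a piercing set of size $(p-d+1)-(q-d+1)+1 = p-q+1$, which is the desired bound on $\tau(\F)$.

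\textbf{Main obstacle.} The core difficulty is that the naive counting only yields the classical $(p-d+1,\,q-d+1)$ property for $\F|_{K_X}$ — namely a single intersecting $(q-d+1)$-tuple in each $(p-d+1)$-subfamily — whereas the induction requires the stronger $(p-d+1,\,q-d+1)_{r'}$ condition with $r' > \binom{p-d+1}{q-d+1} - \binom{p-2d+3}{q-2d+3}$. Upgrading this density would require a more refined count, likely via double-counting pairs $(X,Y)$ consisting of a $(d-1)$-subset $X$ of $\F_0$ and an intersecting $q$-superset $Y$, combined with a careful geometric choice of $X$ so that $K_X$ collapses into a hyperplane without reducing the density of intersecting tuples inherited from $\F$.
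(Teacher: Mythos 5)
Your reduction to $\Re^{d-1}$ does not go through, for two reasons, only one of which you flag yourself.

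First, the geometric step is not available. For a generic $(d-1)$-subset $X$ of convex bodies in $\Re^d$ the intersection $K_X=\bigcap X$ is full-dimensional, and there is no reason any hyperplane through it preserves the relevant intersection pattern: a set $C\in\F$ that meets $K_X$ need not meet a \emph{supporting} hyperplane of $K_X$, and a piercing set chosen in such a hyperplane need not pierce $\F|_{K_X}$, let alone $\F$. So the sentence ``choose $X$ so that $K_X$ is contained in an affine hyperplane'' cannot be realized in general, and the intended induction on $d$ has no well-defined target family in $\Re^{d-1}$. Second, as you correctly observe, even if one could collapse $K_X$ to a hyperplane, the counting only delivers the classical $(p-d+1,q-d+1)$ property for $\F|_{K_X}$, not the density hypothesis $(p-d+1,q-d+1)_{r'}$ that the inductive call requires; without that upgrade the induction would have to invoke a general $\HD_{d-1}(p-d+1,q-d+1)$ bound, which is far weaker than $p-q+1$.

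The proof of [MS11] (sketched in Section~\ref{sec:Soberon} and in Lemma~\ref{Lem:MS} above) avoids both problems by reducing directly to dimension $1$ rather than $d-1$, and by choosing a \emph{$d$-tuple} $A_1,\dots,A_d$ rather than a $(d-1)$-tuple. The $d$-tuple is chosen extremally --- it achieves $\mathrm{lexmin}(\mathrm{lexmax}(\bigcap_{i=1}^d A_i))$ --- and this extremality yields a \emph{line} $\ell$ through $x_0$ with the property that every $C\in\F$ meeting each of the $d$-many $(d-1)$-fold intersections $\bigcap_{i\neq j}A_i$ must cross $\ell$. The counting hypothesis $r>\binom{p}{q}-\binom{p+1-d}{q+1-d}$ is then used to show that among any $p-d$ sets not containing $x_0$, there is a $(q-d+1)$-tuple whose members each meet all the $\bigcap_{i\neq j}A_i$; these sets all cross $\ell$ and, by Helly on $\ell$, have a common point on $\ell$. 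Hence $\{C\cap\ell: C\in\F,\ x_0\notin C\}$ satisfies the $(p-d,q-d+1)$ property on a line, the one-dimensional Hadwiger--Debrunner theorem pierces it with $p-q$ points, and $x_0$ accounts for the rest, giving $p-q+1$. The key idea you are missing is this extremal choice of $d$-tuple that forces the ``escape to a line''; your counting lemma (one intersecting $q$-tuple through every $(d-1)$-set) is in the right spirit but too weak, and the right object to restrict to is the line $\ell$, not $K_X$.
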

The proof of Theorem~\ref{Thm:MS} uses a nice geometric argument.
As mentioned in~\cite{MS11}, the upper bound of Theorem~\ref{Thm:MS} is far from being tight. Moreover, the value of $r$ in the theorem is rather large -- almost all the ${{p}\choose{q}}$ $q$-tuples are required to intersect.

\medskip

In this paper we present improved upper bounds on $\HD_d(p,q)_r$. For $p,q$ sufficiently large (as function of $d$), our bounds hold already when $r$ is a tiny fraction of ${{p}\choose{q}}$. Our main result is the following:
\begin{theorem}\label{Thm:Main-UBT}
$\HD_d(p,q)_r$ satisfy:
\begin{enumerate}
\item For all $p \geq q \geq d+1$ and $r \geq \Theta_d \left({{\frac{d-1}{d}p}\choose{q-d}} {{\frac{p}{d}}\choose{d}} \right)$,
\[
\HD_d(p,q)_r \leq \min(p-q+1,\frac{p}{d}-1).
\]
\item For any $\eps>0$, any $p \geq q \geq d+1$ such that $p>p_0(\eps)$ and all $r \geq \Theta_{d,\eps} \left( \frac{p^{\left(\frac{d-1}{d}+\eps\right)q+1}}{(q-d)!} \right)$,
\[
\HD_d(p,q)_r \leq \min(p-q+1,p-p^{\frac{d-1}{d}+\eps}+2).
\]
\noindent Here, $\Theta_d(\cdot)$ hides a multiplicative factor that may depend on $d$.
\end{enumerate}
\end{theorem}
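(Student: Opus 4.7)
The plan is to prove each of the two bounds in the $\min$ by contradiction, handling the two terms separately. For the first term $p-q+1$ no new argument is needed: in Part~1, when $q > \frac{d-1}{d}p+1$, the classical Hadwiger--Debrunner theorem already gives $\HD_d(p,q)\le p-q+1$, and in Part~2, when $q > p^{(d-1)/d+\eps}$, the recent bound from~\cite{KST16} gives $\HD_d(p,q) \leq p-q+2$; both transfer to $\HD_d(p,q)_r$ since any $(p,q)_r$-family is a $(p,q)$-family. So the content of the theorem lies in the regime of small $q$, where I would establish the second term $p/d-1$ (Part~1) or $p-p^{(d-1)/d+\eps}+2$ (Part~2).

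For the $p/d-1$ bound of Part~1 I would argue contrapositively: assume $\F$ has piercing number at least $p/d$ and produce a $p$-subset $\G \subseteq \F$ whose number of intersecting $q$-tuples is less than $r$, contradicting $(p,q)_r$. First I would extract $\G$ so that it inherits the piercing lower bound, by picking a minimum subfamily of $\F$ witnessing piercing number $\geq p/d$ and completing it to size $p$ by adding arbitrary sets; monotonicity of the piercing number then guarantees that $\G$ itself has piercing number $\geq p/d$. A short pigeonhole shows that no point of $\Re^d$ lies in more than $\lceil\frac{d-1}{d}p\rceil+1$ sets of $\G$, else the remaining sets would be pierceable by fewer than $p/d-1$ points. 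Next, I would apply Kalai's Upper Bound Theorem for convex sets to bound the number of intersecting $q$-tuples of $\G$ by $C_d\binom{p/d}{d}\binom{(d-1)p/d}{q-d}$: the factor $\binom{p/d}{d}$ should count Helly-witness $d$-subsimplices picking out one of the $\geq p/d$ piercing cells, and $\binom{(d-1)p/d}{q-d}$ should count extensions of such a $d$-subsimplex to a $q$-tuple within the same cell. For $r$ above the stated threshold this contradicts the $(p,q)_r$ assumption.

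Part~2 follows by the same three steps with $p/d$ replaced by $p^{(d-1)/d+\eps}$ and~\cite{KST16} used in place of Hadwiger--Debrunner. The corresponding UBT estimate is $O_{d,\eps}(p^{((d-1)/d+\eps)q+1}/(q-d)!)$, matching the claimed threshold on $r$. The main obstacle in both parts is this last step: extracting from Kalai's UBT the precise decomposition $\binom{\tau}{d}\binom{p-\tau}{q-d}$ of the bound on the number of intersecting $q$-tuples, where $\tau$ is the enforced piercing number of $\G$. I expect this to require either a careful application of the $h$-vector inequalities underpinning Kalai's theorem or a short Helly-localization argument that segregates each intersecting $q$-tuple into a specific piercing cell.
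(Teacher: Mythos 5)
Your high-level plan is the contrapositive of the paper's argument, and most of it would go through, but there is a genuine gap in the step where you extract the $p$-subset $\G$. You propose to take a minimum-size subfamily $\G_0\subseteq\F$ with piercing number $\geq p/d$ and then pad it to size $p$. For this to make sense you need $|\G_0|\leq p$, and nothing guarantees that: a minimal (inclusion-wise critical) subfamily with a given piercing number need not have cardinality bounded by $p$ in $\Re^d$ for $d\geq 2$. More to the point, it is simply not true in general that a family with piercing number $\geq\tau$ must contain a $p$-element subfamily with piercing number $\geq\tau$; local pierceability of all $p$-subsets does not bound the global piercing number by the same quantity (that is exactly the phenomenon the Alon--Kleitman theorem quantifies). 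So the extraction step, as stated, does not follow from ``monotonicity of the piercing number.''

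The paper avoids this entirely by arguing forward rather than contrapositively: from the $(p,q)_r$ hypothesis with $r$ above the Kalai threshold, the Upper Bound Theorem (Theorem~\ref{Thm:UBT}) applied to each $p$-subset shows $\F$ satisfies the $(p,q+k)$ property with $k = p-q-f(p)+1$, and then Theorem~\ref{Thm:HD} (resp.~Theorem~\ref{Thm:Our} in part~(2)) directly yields $\HD_d(p,q)_r \leq p-(q+k)+1 = f(p)$; this is Lemma~\ref{Prop:UBT(1)1}. If you want to keep your contrapositive phrasing, the correct way to produce $\G$ is to invoke the Hadwiger--Debrunner theorem itself: if the piercing number of $\F$ exceeded $p-(q+k)+1$ with $q+k>\frac{d-1}{d}p+1$, then by Theorem~\ref{Thm:HD} (contrapositive) $\F$ fails the $(p,q+k)$ property, so \emph{some} $p$-subset $\G$ has no intersecting $(q+k)$-tuple, i.e.\ $f_{q+k-1}(\G)=0$; the UBT then bounds $f_{q-1}(\G)$ below $r$. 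Your ``no deep point'' pigeonhole is a valid way to get $f_{d+s}=0$ when you do have a $\G$ with large piercing number, but it is the existence of that $\G$ that is the hard part, and it is furnished by HD, not by pigeonhole. Finally, your worry about ``extracting from Kalai's UBT the precise decomposition $\binom{\tau}{d}\binom{p-\tau}{q-d}$'' is not an actual obstacle: the statement of the UBT as recorded in the paper already gives the bound $\sum_{i=0}^{d}\binom{s}{q-i}\binom{p-s}{i}$, and the $\Theta_d$-bound in the theorem is just the dominant $i=d$ term of that sum with $s=\frac{d-1}{d}p+1-d$.
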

The latter bound on $r$ is not far from being tight, as an explicit example presented in~\cite{MS11} (which we recall below) yields a lower bound of $r=\Omega \left(\frac{p^{\left(\frac{d-1}{d}+\eps\right)(q-1)+1}}{(q-1)!}\right)$ for the same piercing number. The upper and lower bounds differ by a multiplicative factor of
$\frac{p^{\frac{d-1}{d}+\eps}(q-1)!}{(q-d)!}$, which is smaller than $pq^{d-1}$ for all $\eps \leq \frac{1}{d}$.

We note that for $p,q$ sufficiently large (as function of $d$), the condition in (1) is equivalent to $r \geq \frac{{{p}\choose{q}}}{c^q}$ for $c>1$ that depends only on $d$, and the condition in (2) (for $\epsilon=\frac{1}{2}$) is stronger than the condition $r \geq \frac{{{p}\choose{q}}}{p^{\frac{q}{2d}}}$ stated in the abstract. This means that the assertion of Theorem~\ref{Thm:Main-UBT} holds already when $r$ is an exponentially (in $q$) small fraction of ${{p}\choose{q}}$.

The proof of Theorem~\ref{Thm:Main-UBT} uses Kalai's Upper Bound Theorem for convex sets~\cite{K84}, combined with the Hadwiger-Debrunner theorem and the recent improved upper bound on $\HD_d(p,q)$ obtained in~\cite{KST16}.

\medskip

In view of Theorem~\ref{Thm:Main-UBT}(1), it is natural to ask whether a smaller value of $r$ is sufficient if we allow $\HD_d(p,q)_r$ to be larger than $\frac{p}{d}$ (but still smaller than $p-q$). We partially answer this question in the following generalization of Theorem~\ref{Thm:MS}.
\begin{theorem}\label{Thm:Main-Soberon}
For any $p \geq q \geq d+1$ and $0 \leq k \leq p-q-1$, denote by $m_0(k)$ the smallest integer $m$ such that ${{m+1}\choose{2}} \geq \frac{(p-q-k-1)(p-q+k+2)}{2}+1$. Let $\F$ be a non-$(p-q)$-degenerate family of compact convex sets in $\Re^d$ that satisfies the $(p,q)_r$ property, with
\begin{equation}
\label{Eq:Ex-Soberon1}
r \geq {{p}\choose{q}} - {{p-d+1}\choose{q-d+1}} + 1 + {{q-d-2+m_0(k)}\choose{q-d}} + {{q-d-1+m_0(k)}\choose{q-d+1}}.
\end{equation}
Then $\F$ can be pierced by at most $k+2$ points.
\end{theorem}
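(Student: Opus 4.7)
The plan is to refine the Montejano-Sober\'on argument underlying Theorem~\ref{Thm:MS} by inserting an extra counting step that exploits the improved piercing target $k+2$ and the non-$(p-q)$-degenerate hypothesis on $\F$.

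First I note that the bound on $r$ in~\eqref{Eq:Ex-Soberon1} is at least as strong as the hypothesis of Theorem~\ref{Thm:MS}, since the added binomial terms are non-negative. Theorem~\ref{Thm:MS} therefore yields a piercing set $P=\{x_1,\dots,x_s\}$ of $\F$ with $s \le p-q+1$. If $s \le k+2$ we are already done, so I may assume $k+3 \le s \le p-q+1$; the non-$(p-q)$-degenerate hypothesis makes the extremal case $s=p-q+1$ (where $P$ is irredundant) the genuinely hard one to handle.

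Second, I argue by contradiction, assuming that $\F$ cannot be pierced by any $k+2$ points. Then for every $(k+2)$-subset $Q \subseteq P$ there is a witness $A_Q \in \F$ disjoint from every point of $Q$, hence pierced only by points of $P \setminus Q$. Aggregating over all $Q$, I define a graph $G$ on $P$ whose edges are the pairs $\{x_i,x_j\}$ that are simultaneously essential in some minimal sub-piercing supplied by a witness. A double-counting of the incidences ``witness--pair" shows that
\[
|E(G)| \;>\; \binom{s}{2} - \binom{k+2}{2},
\]
and the right-hand side equals the threshold $\tfrac12(p-q-k-1)(p-q+k+2)$ appearing in the definition of $m_0(k)$. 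The Kruskal-Katona theorem applied to the clique complex of $G$ then forces $G$ to contain a clique on $m_0(k)$ vertices.

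Third, I convert this clique into extra non-intersecting $q$-tuples. The Montejano-Sober\'on argument already produces $\binom{p}{q} - \binom{p-d+1}{q-d+1}$ non-intersecting $q$-tuples inside a suitable $p$-subfamily $\F_0 \subseteq \F$. The witnesses attached to the $m_0(k)$-clique of $G$, combined with Kalai's Upper Bound Theorem for the nerve of the associated convex subfamily, then supply at least $\binom{q-d-2+m_0(k)}{q-d} + \binom{q-d-1+m_0(k)}{q-d+1}$ further non-intersecting $q$-tuples in $\F_0$, pairwise disjoint from the baseline family. Summing the two contributions strictly exceeds $\binom{p}{q} - r$, contradicting the $(p,q)_r$ property and completing the proof.

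The hardest part will be Step~3: matching the Kalai/Kruskal-Katona count exactly to $\binom{q-d-2+m_0(k)}{q-d} + \binom{q-d-1+m_0(k)}{q-d+1}$ and, critically, verifying that the new non-intersecting $q$-tuples extracted from the $m_0(k)$-clique of witnesses are disjoint from those already produced by the Montejano-Sober\'on argument. This bookkeeping depends on a careful accounting of which $q$-tuples the Helly dimension $d$ already forces to intersect inside the subfamily, and which remain free to be disjoint.
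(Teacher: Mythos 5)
Your proposal does not match the paper's argument, and more importantly it has a fatal internal gap, so let me focus on the latter.

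The key false step is the claim ``The Kruskal-Katona theorem applied to the clique complex of $G$ then forces $G$ to contain a clique on $m_0(k)$ vertices.'' Kruskal--Katona bounds the number of $k$-faces from \emph{below} given the number of $(k+1)$-faces; it never forces higher-dimensional faces to exist from a count of lower-dimensional ones. A graph on $s$ vertices with more than $\binom{s}{2}-\binom{k+2}{2}$ edges need not contain any clique of size $m_0(k)$ at all (take a blow-up of a triangle, say). In fact the quantity $\binom{m_0(k)+1}{2}$ in the definition of $m_0$ has nothing to do with cliques: it is a threshold on a \emph{pair count} coming from the tight one-dimensional example in Proposition~\ref{Prop:Dim1}. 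The paper never looks for a clique.

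You also leave Step~3 entirely unspecified; the definitions of the graph $G$ (``pairs simultaneously essential in some minimal sub-piercing supplied by a witness'') and of the way the clique would spawn new non-intersecting $q$-tuples are not precise enough to check, and you flag them yourself as the hard part. Finally, your use of the non-$(p-q)$-degeneracy hypothesis (to ``make $s=p-q+1$ the hard case'') is not what it is for: in the paper it simply guarantees that $\F'=\{C\in\F : x_0\notin C\}$ still has at least $p-q+1$ members so that the counting subfamily $\C$ can be chosen inside $\F'$.

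The paper's route is quite different and avoids these difficulties: it invokes the Montejano--Sober\'on lemma (Lemma~\ref{Lem:MS}) to obtain a line $\ell$ and a $d$-tuple $A_1,\dots,A_d$ with the ``projection'' property, then counts intersecting $q$-tuples in a $p$-element auxiliary family $\C\cup\D\cup\{A_1,\dots,A_d\}$ to show that $\F'$ satisfies the $(p-q+1,2)_{\binom{m_0(k)+1}{2}}$ property \emph{through $\ell$}. Piercing then comes from the one-dimensional $(p,q)_r$ result (Proposition~\ref{Prop:Dim1}, via Lemma~\ref{Lem:L}), and one adds the point $x_0$. If you want to salvage your outline you would have to replace the clique search by a direct pair-count on $\ell$, which is exactly what makes the bootstrapping through dimension one work.
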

Note that in the case $k=p-q-1$, Theorem~\ref{Thm:Main-Soberon} reduces to Theorem~\ref{Thm:MS}. The proof of Theorem~\ref{Thm:Main-Soberon} uses a bootstrapping technique based on the technique pioneered by Montejano and Sober\'{o}n in~\cite{MS11}. The added value of Theorem~\ref{Thm:Main-Soberon} over Theorem~\ref{Thm:Main-UBT} is demonstrated well for small values of $(p,q)$. For example, for $(p,q,d)=(6,3,2)$, Theorem~\ref{Thm:Main-UBT} (actually, its proof) implies that $r=17$ is sufficient for assuring piercing by 2 points. Theorem~\ref{Thm:Main-Soberon} shows that actually $r=16$ suffice. In addition, $r=11$ is sufficient for piercing by 4 points.

We also show that the technique of Montejano and Sober\'{o}n can be used to obtain an alternative proof of the Hadwiger-Debrunner theorem, which may be of independent interest due to its simplicity.

\section{Proof of Theorem~\ref{Thm:Main-UBT}}
\label{sec:UBT}

As mentioned already, in the proof of Theorem~\ref{Thm:Main-UBT} we use Kalai's Upper Bound Theorem for convex sets~\cite{K84}, the Hadwiger-Debrunner theorem~\cite{HD57}, and the recent upper bound on $\HD_d(p,q)$ obtained in~\cite{KST16}. We state these results first.
\begin{theorem}[\cite{K84}]
\label{Thm:UBT}
Let $\F$ be a family of $p$ convex sets in $\Re^d$. Denote by $f_{q-1}$ the number of $q$-tuples of sets in $\F$
whose intersection is non-empty. If $f_{d+s}=0$ for some $s \geq 0$ then for any $q>0$,
\[
f_{q-1} \leq \sum_{i=0}^d {s \choose q-i} {p-s \choose i}.
\]
\end{theorem}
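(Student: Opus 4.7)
The plan is to prove Kalai's bound in three stages, passing from geometry to topology to pure combinatorics. First, I would work with the nerve complex $N(\F)$ on vertex set $\F$, whose $(q-1)$-dimensional faces are exactly the $q$-tuples of $\F$ with non-empty common intersection, so that $f_{q-1}$ in the theorem equals the number of $(q-1)$-faces of $N(\F)$, and the hypothesis $f_{d+s}=0$ means $N(\F)$ has no face of size $d+s+1$. By Helly's theorem applied to every subfamily $\F' \subseteq \F$, together with the Nerve Theorem (applicable because every non-empty intersection of convex sets is contractible), each induced subcomplex $N(\F')$ is homotopy equivalent to $\bigcup_{A \in \F'} A \subseteq \Re^d$, and in particular has vanishing reduced homology in dimensions $\geq d$. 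Thus $N(\F)$ is a \emph{$d$-Leray} simplicial complex, and it suffices to prove a purely combinatorial bound on the face numbers of a $d$-Leray complex on $p$ vertices with no face of size exceeding $d+s$.

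Second, I would reduce this combinatorial problem to the shifted case via Kalai's symmetric algebraic shifting operator $\Delta$. This operator is known to (i) preserve the entire $f$-vector, so that $f_{q-1}(\Delta(N(\F))) = f_{q-1}(N(\F))$ and the vanishing $f_{d+s}=0$ is preserved; (ii) preserve the $d$-Leray property; and (iii) produce a shifted simplicial complex, in which replacing any vertex of a face by one of smaller index yields another face. Combined, these properties reduce Theorem~\ref{Thm:UBT} to the statement: for any shifted $d$-Leray complex $K$ on $[p]$ with $f_{d+s}(K) = 0$, the face numbers $f_{q-1}(K)$ are bounded componentwise by those of the extremal complex $K^*$ whose maximal faces are $[s] \cup A$ with $A$ ranging over the $d$-subsets of $\{s+1,\ldots,p\}$.

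Third, the final counting is then direct: a $q$-face of $K^*$ chooses $q-i$ vertices from $[s]$ and $i \leq d$ vertices from $\{s+1,\ldots,p\}$, giving the count $\sum_{i=0}^d \binom{s}{q-i}\binom{p-s}{i}$. The main obstacle of the whole argument lies in the second stage: proving that symmetric algebraic shifting preserves the $d$-Leray property, and that the resulting shifted $d$-Leray complex with no $(d+s+1)$-face is dominated componentwise by $K^*$. The former is a deep theorem of Kalai whose proof uses generic linear changes of coordinates in the exterior algebra together with the interpretation of local cohomology of the Stanley-Reisner ring as encoding the reduced homology of induced subcomplexes. The latter is a combinatorial argument exploiting shiftedness: any violation forces an induced copy of the boundary of a $(d+1)$-simplex on top vertices, contradicting $d$-Leray-ness. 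By contrast, Stages 1 and 3 are comparatively soft once the shifting technology is accepted.
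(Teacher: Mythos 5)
This theorem is not proved in the paper at all: it is quoted verbatim from Kalai~\cite{K84} and used as a black box in Section~\ref{sec:UBT}, so there is no in-paper proof to compare your argument against. Judged on its own, your outline correctly reproduces the architecture of Kalai's actual proof: pass to the nerve, observe it is a $d$-Leray complex on $p$ vertices with $f_{d+s}=0$, reduce to shifted complexes via algebraic shifting, and count the faces of the extremal shifted complex $K^*$ with facets $[s]\cup A$, $A\in\binom{\{s+1,\dots,p\}}{d}$, which indeed has exactly $\sum_{i=0}^{d}\binom{s}{q-i}\binom{p-s}{i}$ faces of size $q$. Two small points in Stage~1: Helly's theorem is not actually an ingredient of the Nerve Theorem (contractibility of the non-empty intersections is what is needed, and the vanishing of $\tilde H_i$ for $i\ge d$ comes from the union living in $\Re^d$), and for compact sets one should thicken to open neighborhoods small enough to preserve the intersection pattern before invoking homology vanishing for open subsets of $\Re^d$; both are routine.

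The substantive caveat is that what you have written is a proof \emph{outline}, not a proof: the two steps you yourself identify as the heart of the matter --- that algebraic shifting preserves the $d$-Leray property, and that every shifted $d$-Leray complex on $[p]$ with no face of size $d+s+1$ is dominated componentwise by $K^*$ --- are exactly where all the work of \cite{K84} lies, and you invoke them as known deep theorems rather than proving them. Your one-line justification of the second step (``a violation forces an induced boundary of a $(d+1)$-simplex'') is too coarse as stated; the actual argument uses the precise combinatorial characterization of which shifted complexes are $d$-Leray. Since the paper itself imports the theorem wholesale from \cite{K84}, citing these ingredients is arguably no worse than what the paper does, but you should be explicit that your write-up establishes the reduction and the final count, not the theorem itself.
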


\begin{theorem}[\cite{HD57}]\label{Thm:HD}
For  $p \geq q \geq d+1$ such that $q > \frac{d-1}{d}p +1$,
$$HD_d(p,q) = p-q+1.$$
\end{theorem}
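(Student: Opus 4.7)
The theorem consists of a lower bound $\HD_d(p,q) \geq p-q+1$ (which in fact holds for all $p \geq q \geq d+1$) and an upper bound $\HD_d(p,q) \leq p-q+1$ valid in the range $q > \frac{d-1}{d}p+1$. I would prove them separately.

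For the lower bound I would exhibit an explicit family. Take $p-q+1$ pairwise disjoint compact convex sets $C_1, \ldots, C_{p-q+1}$ in $\Re^d$, fix a point $x \in C_1$, and adjoin $q-1$ further compact convex sets each containing $x$, slightly perturbed so that the total family $\F$ has $p$ distinct members. Then $\F$ trivially satisfies the $(p,q)$ property, because $C_1$ together with the $q-1$ adjoined sets is a $q$-tuple containing $x$, while the piercing number is exactly $p-q+1$, since the $C_i$ are pairwise disjoint and each requires its own piercing point. This construction does not use the numerical hypothesis on $q$.

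For the upper bound I would argue by induction on $p$. The base case $p = q$ is immediate: by the $(p,q)$ property every $q$-subset of $\F$ shares a common point, so since $q \geq d+1$ Helly's theorem gives that all of $\F$ has a common point, so $\tau(\F) = 1 = p-q+1$. For the inductive step, note that the hypothesis $q > \frac{d-1}{d}p+1$ implies the analogous hypothesis $q > \frac{d-1}{d}(p-1)+1$, so the inductive hypothesis applies to $(p-1, q)$. Split into cases: either $\F$ already satisfies $(p-1, q)$, in which case induction yields $\tau(\F) \leq (p-1)-q+1 = p-q$ and we are finished; or there is a subset $\F_0 \subset \F$ with $|\F_0| = p-1$ such that no $q$-tuple of $\F_0$ shares a common point. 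In the latter case, applying $(p,q)$ to $\F_0 \cup \{A\}$ for each $A \in \F \setminus \F_0$ forces the witness $q$-tuple to contain $A$, so $A$ is attached to some $(q-1)$-subset $S_A \subset \F_0$ whose intersection with $A$ is nonempty.

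The main obstacle is the second case, where a piercing set of size $p-q+1$ must be constructed from this structural information. My plan is to first pierce $\F_0$ by some $k \leq p-q$ points using a Helly-style analysis of its non-intersection pattern — the hypothesis ``no $q$-tuple intersects'' together with $q \geq d+1$ forces many empty $(d+1)$-fold intersections and hence a nontrivial structural decomposition of $\F_0$ — and then to argue that each $A \in \F \setminus \F_0$ is already pierced by one of these $k$ points or by a single additional common piercing point. The crucial counting step is where the strict inequality $q > \frac{d-1}{d}p+1$, equivalently $d(p-q+1) < p$, enters: it bounds how many independent ``new'' piercing points the attachments $S_A$ can generate before the pigeonhole collapses them into at most $p-q+1$ total points. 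Balancing this counting against the extracted piercing of $\F_0$ is the delicate technical core of the proof, and the precise inequality ensures the balance holds exactly at $p-q+1$.
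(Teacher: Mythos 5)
Your lower bound construction is fine (and is essentially the standard one, tailored to the definition with $|\F|=p$). Your upper bound argument, however, has a genuine gap. In the inductive step you take a $(p-1)$-element subfamily $\F_0 \subset \F$ with no intersecting $q$-tuple and assert it can be pierced by $k \leq p-q$ points, with a single additional point handling $\F \setminus \F_0$. That assertion fails in general. Let $\F$ consist of $p-q+1$ pairwise disjoint compact convex sets $C_1,\ldots,C_{p-q+1}$ together with many copies of a large convex set $D$ containing all of them. Then $\F$ satisfies the $(p,q)$ property (any $p$ members include at least $q-1$ copies of $D$, which together with one $C_i$ form an intersecting $q$-tuple), yet taking $\F_0$ to be the $C_i$'s together with $q-2$ copies of $D$ gives a $(p-1)$-element subfamily with no intersecting $q$-tuple whose piercing number is already $p-q+1$, exceeding your budget of $p-q$ before you even look at $\F \setminus \F_0$. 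Moreover, nothing forces a single additional common point to pierce all of $\F\setminus\F_0$: the witness $(q-1)$-subsets $S_A$ and their intersection points with $A$ vary with $A$, and no structure is identified that would make them coalesce. The ``crucial counting step'' you allude to is exactly where the theorem's content lies, and no argument for it is supplied.

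The paper's proof is structurally different and hinges on an extremal choice that your sketch lacks. Following Montejano and Sober\'on, one picks $A_1,\ldots,A_d \in \F$ minimizing, in lexicographic order, the lexicographic maximum $x_0$ of $\cap_{i=1}^d A_i$. A Helly-type argument (Lemma 8.1.2 of Matou\v{s}ek) shows that any $C\in\F$ with $x_0 \notin C$ must be disjoint from $\cap_{i=1}^d A_i$. Applying the $(p,q)$ property to $p-d$ such $C$'s together with $A_1,\ldots,A_d$, the witness $q$-tuple cannot contain all of $A_1,\ldots,A_d$, so $\G=\{C\in\F : x_0\notin C\}$ satisfies the $(p-d,q-d+1)$ property. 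This yields $\HD_d(p,q) \leq \max(\HD_d(p-d,q-d+1)+1,\, p-q+1)$, and the threshold $q>\frac{d-1}{d}p+1$ is exactly invariant under $(p,q)\mapsto(p-d,q-d+1)$, so the recursion closes at Helly's theorem. Note that in your step-by-one induction the hypothesis $q>\frac{d-1}{d}p+1$ only becomes slacker as $p$ decreases and is never genuinely exploited --- a warning sign that the induction is being run on the wrong increment.
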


\begin{theorem}[\cite{KST16}]\label{Thm:Our}
Let $\eps>0$. There exists $p_0(\eps,d)$ such that for any $p \geq q \geq d+1$ with $p \geq p_0$ and $q \ge p^{\frac{d-1}{d}+\eps}$, we have
\[
\HD_d(p,q) \leq p-q+2.
\]
\end{theorem}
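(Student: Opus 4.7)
The plan is to establish the piercing bound via a two-step argument: produce a single point $x^*$ that pierces all of $\F$ except a very small residue, then pierce the residue using the classical Hadwiger-Debrunner theorem. The engine for finding $x^*$ is Kalai's Upper Bound Theorem (Theorem~\ref{Thm:UBT}), used in contrapositive form to convert a ``no deep intersection'' assumption into an upper bound on $f_{q-1}$, which we will clash with a lower bound coming from the $(p,q)$ property.

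The first step derives a lower bound on $f_{q-1}$ from the $(p,q)$ property alone. Setting $N=|\F|$, every one of the $\binom{N}{p}$ $p$-subsets of $\F$ must contain at least one intersecting $q$-tuple, while each intersecting $q$-tuple lies in exactly $\binom{N-q}{p-q}$ $p$-subsets, so a double count yields $f_{q-1} \geq \binom{N}{p}/\binom{N-q}{p-q}$, which is a non-trivial fraction of $\binom{N}{q}$. Matching this against Kalai's upper bound $f_{q-1} \leq \sum_{i=0}^{d}\binom{s}{q-i}\binom{N-s}{i}$ (valid whenever $f_{d+s}=0$), the assumption $q \geq p^{(d-1)/d+\eps}$ forces the right-hand side to be much smaller than the lower bound unless $s$ is essentially $q-d-1$. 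Consequently some sub-collection of $q-O(1)$ sets of $\F$ shares a common point $x^*$.

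With $x^*$ in hand, let $\F' = \{A \in \F : x^* \notin A\}$. I would argue that $|\F'| \leq p-1$: otherwise $\F'$ itself still satisfies the $(p,q)$ property, and iterating the previous step produces a second heavy sub-collection disjoint from the first, which combined with the first contradicts the global upper bound from Kalai's theorem (by over-producing intersecting $q$-tuples in a way that again violates $f_{d+s}=0$ for moderate $s$). Once $|\F'| \leq p-1$, the hypothesis $q \geq p^{(d-1)/d+\eps}$ guarantees $q > \frac{d-1}{d}|\F'|+1$ for $p \geq p_0(\eps)$, so Theorem~\ref{Thm:HD} pierces $\F'$ by at most $|\F'|-q+1 \leq p-q$ points. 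Adding $x^*$ gives a piercing of $\F$ by at most $p-q+1$ points, with the extra slack of one in the stated bound $p-q+2$ absorbing a possible final iteration or boundary case where $|\F'|=p-1$ and the HD bound is saturated.

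The main obstacle is the quantitative matching: one must make the comparison between the two bounds on $f_{q-1}$ sharp enough to force $s$ all the way to $q-d-1$, not merely to some constant depth (the latter only gives fractional Helly and an Alon-Kleitman type piercing, not the near-tight $p-q+2$). The threshold $q \geq p^{(d-1)/d+\eps}$ is precisely the regime in which, for every $s$ bounded away from $q-d-1$, the sum $\sum_{i=0}^d \binom{s}{q-i}\binom{N-s}{i}$ is asymptotically dominated by $\binom{N}{p}/\binom{N-q}{p-q}$; verifying this by careful estimation of binomial ratios is the technical heart of the argument.
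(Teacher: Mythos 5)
This statement is Theorem~\ref{Thm:Our}, which is \emph{cited} from~\cite{KST16} and not proved in the present paper; there is therefore no paper proof to compare against. Nonetheless, your proposed proof has a genuine gap.

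The central difficulty is the step from ``a heavy point exists'' to ``$|\F'| \leq p-1$.'' The double count gives $f_{q-1} \geq \binom{N}{p}/\binom{N-q}{p-q} = \binom{N}{q}/\binom{p}{q}$, and comparing this with Kalai's bound $\sum_{i=0}^d \binom{s}{q-i}\binom{N-s}{i}$ does \emph{not} force $s$ all the way up to $q-d-1$: in the regime $q \approx p^{(d-1)/d+\eps}$ with $p$ large, $\binom{p}{q}$ is enormous, so the lower bound on $f_{q-1}$ is only a vanishingly small fraction of $\binom{N}{q}$, and the comparison only yields $f_{d+s}\neq 0$ for quite small $s$ --- i.e., a fractional-Helly-type conclusion that some $(d+s+1)$-tuple intersects, not that nearly $q$ sets share a point. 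Even granting a point $x^*$ common to $q-O(1)$ sets, the residue $\F'=\{A\in\F : x^*\notin A\}$ has size at least $N - q + O(1)$, which for $N$ slightly larger than $p$ already exceeds $p-1$; there is no mechanism forcing $|\F'|\leq p-1$. Your argument for this bound --- that iterating would ``over-produce intersecting $q$-tuples'' and contradict Kalai's theorem --- does not hold: repeatedly extracting heavy points is exactly the engine of the Alon--Kleitman proof, which terminates after polynomially many iterations without any contradiction, and yields a piercing bound polynomial in $p$, far weaker than $p-q+2$. The actual proof in~\cite{KST16} instead bounds the maximum clique of the intersection (hyper)graph and runs a dimension-reduction argument; it does not go through Kalai's Upper Bound Theorem at all, and getting the near-tight $p-q+2$ requires machinery not available from the fractional-Helly comparison you set up.
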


The intuition behind the proof is simple. Theorems~\ref{Thm:HD} and~\ref{Thm:Our} yield a strong bound on the piercing number for
a family that satisfies the $(p,q)$ property with a `large' $q$. In order to apply them, we need to `enlarge' $q$, and this is done
using Theorem~\ref{Thm:UBT}. Specifically,
if some family $\F'$ of $p$ convex sets contains `many' intersecting $q$-tuples, Theorem~\ref{Thm:UBT} allows
to deduce that it also contains an intersecting $(q+k)$-tuple, for an appropriate value of $k$. This implies that if a family $\F$ satisfies
the $(p,q)_r$ property, then it must satisfy the $(p,q+k)$ property, for $k=k(r)$. Applying this with a sufficiently
large $r$, we replace $q$ with a sufficiently large $q+k$, and then apply an improved bound on the piercing number that follows from
Theorem~\ref{Thm:HD} or Theorem~\ref{Thm:Our}.

\subsection{Proof of Theorem~\ref{Thm:Main-UBT}(1)}
\label{sec:sub:Thm(1)}

We need the following lemma:
\begin{lemma}\label{Prop:UBT(1)1}
Let $p \geq q \geq d+1$, and let $1 \leq f(p) \leq \frac{p}{d}-1$. If
\[
r \geq r_0 := \sum_{i=0}^d {{p-f(p)-d}\choose{q-i}} {{f(p)+d}\choose{i}} + 1,
\]
then $\HD_d(p,q)_r \leq f(p)$.
\end{lemma}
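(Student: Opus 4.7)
The plan is to upgrade the $(p,q)_r$ hypothesis to an ordinary $(p,q')$ property for a suitable $q' > \frac{d-1}{d}p+1$, and then invoke the Hadwiger--Debrunner Theorem~\ref{Thm:HD}. The shape of $r_0$---a sum that matches exactly the right-hand side of Kalai's Upper Bound Theorem---essentially forces the choice $q' = p - f(p) + 1$, which corresponds to taking $s := p - f(p) - d$ in Theorem~\ref{Thm:UBT}. Note that $s \geq 0$: the hypothesis $f(p) \leq \frac{p}{d}-1$ together with $p \geq d+1$ yields $f(p) \leq p-d$.

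Fix any $p$ members of $\F$ and argue by contradiction: suppose that among these $p$ sets no $q'$-tuple has nonempty intersection. This is precisely the condition $f_{d+s}=0$ (since $d+s+1 = q'$). Theorem~\ref{Thm:UBT} then yields
\[
f_{q-1} \;\leq\; \sum_{i=0}^{d} \binom{s}{q-i}\binom{p-s}{i} \;=\; \sum_{i=0}^{d} \binom{p-f(p)-d}{q-i}\binom{f(p)+d}{i} \;=\; r_0 - 1.
\]
On the other hand, the $(p,q)_r$ property forces $f_{q-1} \geq r \geq r_0$, a contradiction. Hence every $p$-subfamily of $\F$ contains an intersecting $q'$-tuple, i.e., $\F$ satisfies the ordinary $(p,q')$ property with $q' = p-f(p)+1$.

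The final step is to apply Theorem~\ref{Thm:HD}. The assumption $f(p) \leq \frac{p}{d}-1$ gives $q' = p-f(p)+1 > \frac{d-1}{d}p + 1$, and the bounds $1 \leq f(p) \leq p-d$ ensure $d+1 \leq q' \leq p$. Hence Theorem~\ref{Thm:HD} applies and yields
\[
\HD_d(p,q)_r \;\leq\; \HD_d(p,q') \;=\; p - q' + 1 \;=\; f(p),
\]
as required. The whole argument is a bookkeeping exercise that aligns the parameters of Kalai's theorem with those of Hadwiger--Debrunner; the only point demanding any care is verifying the parameter range ($s \geq 0$ and $d+1 \leq q' \leq p$), but this is immediate from the hypotheses, so I do not anticipate a real obstacle.
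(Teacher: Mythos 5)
Your proof is correct and takes essentially the same route as the paper: upgrade the $(p,q)_r$ property to the $(p,q')$ property with $q' = p-f(p)+1$ via Kalai's Upper Bound Theorem, then apply Hadwiger--Debrunner. The paper phrases this by setting $k=p-q-f(p)+1$ and concluding the $(p,q+k)$ property directly, but $q+k=q'$, so the two arguments are the same up to notation and the direct-versus-contrapositive presentation.
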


\begin{proof}
Let $\F$ be a family of compact convex sets in $\Re^d$ that satisfies the $(p,q)_r$ property for some $r \geq r_0$. Put $k=p-q-f(p)+1$. Note that $\frac{d-1}{d}p-q+2 \leq k \leq p-q$. By the choice of $r_0$, Theorem~\ref{Thm:UBT} implies that $\F$ satisfies the $(p,q+k)$ property. As $q+k \geq \frac{d-1}{d}p+2$, Theorem~\ref{Thm:HD} implies that the piercing number of $\F$ is at most $p-(q+k)+1=f(p)$, as asserted.
\end{proof}

\begin{proof}[Proof of Theorem~\ref{Thm:Main-UBT}(1)]
First, note that if $p-q+1 < \frac{p}{d}-1$, then $q > \frac{d-1}{d}p +2$, and thus Theorem~\ref{Thm:HD} implies $\HD_d(p,q)_r \leq p-q+1$ even for $r=1$. Hence, we may assume $\frac{p}{d}-1 \leq p-q+1$. Substituting $f(p)=\frac{p}{d}-1$ into Lemma~\ref{Prop:UBT(1)1}, we get $\HD_d(p,q)_r \leq f(p)=\frac{p}{d}-1$ for all
\begin{align*}
r &\geq \sum_{i=0}^d {{p-(\frac{p}{d}-1)-d}\choose{q-i}} {{(\frac{p}{d}-1)+d}\choose{i}} + 1 \\
&= {{\frac{d-1}{d}p+1-d}\choose{q}} + {{\frac{d-1}{d}p+1-d}\choose{q-1}} \cdot(\frac{p}{d}+d-1) + \ldots + {{\frac{d-1}{d}p+1-d}\choose{q-d}} {{\frac{p}{d}+d-1}\choose{d}} \\
&= O_d \left({{\frac{d-1}{d}p}\choose{q-d}} {{\frac{p}{d}}\choose{d}} \right),
\end{align*}
as asserted.
\end{proof}

\begin{remark}
Note that Lemma~\ref{Prop:UBT(1)1} actually supplies a sequence of upper bounds on $r$, which correspond to any desired piercing number between 1 and $\frac{p}{d}-1$. Piercing numbers larger than $\frac{p}{d}-1$ are treated in Section~\ref{sec:Soberon}.
\end{remark}

\subsection{Proof of Theorem~\ref{Thm:Main-UBT}(2)}
\label{sec:sub:Thm(2)}

The proof of Theorem~\ref{Thm:Main-UBT}(2) is similar to the proof of Theorem~\ref{Thm:Main-UBT}(1), with Theorem~\ref{Thm:Our} replacing Theorem~\ref{Thm:HD}. \begin{proof}[Proof of Theorem~\ref{Thm:Main-UBT}(2)]
Let $\eps>0$, and let $p>p_0(\eps)$ where $p_0$ is chosen to satisfy the hypothesis of Theorem~\ref{Thm:Our}.

First, consider the case $p-q+1 < p-p^{\frac{d-1}{d}+\eps}+1$. Recall that by assumption, $\F$ satisfies the $(p,q)_r$ property with
\begin{equation}\label{Eq:Aux02}
r \geq \Theta_{d,\eps} \left(\frac{p^{\left(\frac{d-1}{d}+\eps \right)q+1}}{(q-d)!} \right),
\end{equation}
and thus, also with
\[
r=\sum_{i=0}^d {{q-d}\choose{q-i}} {{p-q+d}\choose{i}}  + 1
\]
(actually, this is assured by taking the implicit factor in $\Theta_{d,\eps}(\cdot)$ to be sufficiently large). By Theorem~\ref{Thm:UBT}, the latter implies that $\F$ satisfies the $(p,q+1)$ property. As in this case, $q > p^{\frac{d-1}{d}+\eps}$, Theorem~\ref{Thm:Our} implies that $\F$ can be pierced with at most $p-(q+1)+2=p-q+1$ points, as asserted. Hence, we may assume $p - p^{\frac{d-1}{d}+\eps}+1 \leq p-q+1$.

Let $k=p^{\frac{d-1}{d}+\eps} - q$. Since by assumption, $\F$ satisfies the $(p,q)_r$ property with $r$ that satisfies~\eqref{Eq:Aux02}, in particular $\F$ satisfies the $(p,q)_r$ property with
\begin{align*}
r = &\sum_{i=0}^d {{p^{\frac{d-1}{d}+\eps}-d-1}\choose{q-i}}{{p-p^{\frac{d-1}{d}+\eps}+d+1}\choose{i}} + 1 \\
=&\sum_{i=0}^d {{q+k-d-1}\choose{q-i}} {{p-q-k+d+1}\choose{i}}  + 1.
\end{align*}
By Theorem~\ref{Thm:UBT}, this implies that $\F$ satisfies the $(p,q+k)$ property. As $q+k = p^{\frac{d-1}{d}+\eps}$, Theorem~\ref{Thm:Our} implies that $\F$ can be pierced by at most $p-p^{\frac{d-1}{d}+\eps}+2$ points. This completes the proof.
\end{proof}

\begin{remark}\label{Rem:r}
As in Section~\ref{sec:sub:Thm(1)}, a similar argument (using Theorem~\ref{Thm:Our} instead of Theorem~\ref{Thm:HD}) shows that for any $p>p_0(\eps)$ and any $1 \leq f(p) \leq p-p^{\frac{d-1}{d}+\eps}+2$, we have $\HD(p,q)_r \leq f(p)$ for all
\[
r> \sum_{i=0}^d {{p-f(p)+1-d}\choose{q-i}}{{f(p)-1+d}\choose{i}}.
\]
\end{remark}

The upper bound on $r$ asserted in Theorem~\ref{Thm:Main-UBT}(2) is not far from being optimal, as demonstrated by the following example (presented in~\cite{MS11}).
\begin{example*}
Let $\F$ be a family composed of $p-p^{\frac{d-1}{d}+\eps}+3$ pairwise disjoint sets and $p^{\frac{d-1}{d}+\eps}-3$ copies of a convex set that contains all of them. An easy computation shows that $\F$ satisfies the $(p,q)_r$ property for
\[
r={{p^{\frac{d-1}{d}+\eps}-3}\choose{q}} + (p-p^{\frac{d-1}{d}+\eps}+3) \cdot {{p^{\frac{d-1}{d}+\eps}-3}\choose{q-1}} = \Theta \left(\frac{p^{(\frac{d-1}{d}+\eps)(q-1)+1}}{(q-1)!}\right),
\]
while it clearly cannot be pierced by $p-p^{\frac{d-1}{d}+\eps}+2$ points.
\end{example*}
A similar example, with $p-f(p)+2$ instead of $p^{\frac{d-1}{d}+\eps}$, shows that the upper bound on $r$ asserted in Remark~\ref{Rem:r} is also near tight.

\medskip

Finally, we note that in dimension $1$, the \emph{exact relation} between the $(p,q)_r$ property and the piercing number can be obtained easily using the Upper Bound Theorem.
\begin{proposition}\label{Prop:Dim1}
For $p \geq q \geq 2$, let $\F$ be a family of segments on the real line that satisfies the $(p,q)_r$ property. If
\begin{equation}\label{Eq:Dim1}
r \geq {{p-k-2}\choose{q}} + (k+2){{p-k-2}\choose{q-1}}+1,
\end{equation}
then $\F$ can be pierced by $k+1$ points. Conversely, there exists a family $\F_0$ that satisfies the $(p,q)_{r}$ property with $r ={{p-k-2}\choose{q}} + (k+2){{p-k-2}\choose{q-1}}$ and cannot be pierced by $k+1$ points.
\end{proposition}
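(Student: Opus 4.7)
The strategy is to combine the Upper Bound Theorem (Theorem~\ref{Thm:UBT}) specialised to $d=1$ with the classical one-dimensional Helly-type fact of Gallai: a finite family of segments on the real line has piercing number $\ge m$ if and only if it contains $m$ pairwise disjoint segments. (This follows from a standard greedy sweep on right endpoints and needs no further machinery.)

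For the forward direction, I would assume towards contradiction that $\F$ cannot be pierced by $k+1$ points. By the Gallai-type fact there exist $k+2$ pairwise disjoint segments $A_1,\dots,A_{k+2}\in\F$, which (using $|\F|\ge p$) I extend arbitrarily to a sub-family $\F'\subseteq\F$ of size exactly $p$; the piercing number of $\F'$ is still at least $k+2$. The key observation is that $\F'$ admits no intersecting $(p-k)$-tuple: any such tuple would be pierced by one point, and the remaining $k$ sets of $\F'$ by at most $k$ further points, producing a piercing set of total size $k+1$, a contradiction. In the notation of Theorem~\ref{Thm:UBT} this says $f_{p-k-1}(\F')=0$, so that theorem with $d=1$ and $s=p-k-2$ (so $d+s=p-k-1$) yields
\[
f_{q-1}(\F')\le \binom{p-k-2}{q}+(k+2)\binom{p-k-2}{q-1}=r-1,
\]
contradicting the $(p,q)_r$ hypothesis on $\F$.

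For the converse (sharpness) I would let $\F_0$ consist of $k+2$ pairwise disjoint segments $A_1,\dots,A_{k+2}$ together with $p-k-2$ copies of a single segment $C$ that contains all the $A_i$'s. The $A_i$'s force the piercing number to be at least $k+2$, so $\F_0$ cannot be pierced by $k+1$ points. Any intersecting $q$-tuple in $\F_0$ contains at most one $A_i$ by disjointness, and every $q$-tuple with at most one $A_i$ intersects inside $C$; hence $\F_0$ has exactly $\binom{p-k-2}{q}+(k+2)\binom{p-k-2}{q-1}$ intersecting $q$-tuples, witnessing the claimed $(p,q)_r$ instance.

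The only non-mechanical input is the one-dimensional Gallai-type fact, which is classical and has an immediate greedy proof; the rest of the argument is a direct plug-in to Theorem~\ref{Thm:UBT} and elementary binomial-coefficient counting, so I do not expect any essential obstacle.
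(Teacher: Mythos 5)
Your proof is correct and follows essentially the same approach as the paper: apply Kalai's Upper Bound Theorem with $d=1$, $s=p-k-2$ to pass between the $(p,q)_r$ property and the absence of an intersecting $(p-k)$-tuple, and use the same extremal example ($k+2$ pairwise disjoint sets plus $p-k-2$ copies of a containing segment) for sharpness. The only cosmetic differences are that you argue by contrapositive via the one-dimensional Gallai fact, whereas the paper argues directly by showing $\F$ satisfies the $(p,p-k)$ property and then invokes Theorem~\ref{Thm:HD} in dimension $1$ (which is exactly the Gallai fact), and that the paper's extremal family uses single points rather than disjoint segments.
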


\begin{proof}
By Theorem~\ref{Thm:UBT}, if $\F$ satisfies the $(p,q)_r$ property with $r$ that satisfies~\eqref{Eq:Dim1}, then $\F$ satisfies the $(p,p-k)$ property. By Theorem~\ref{Thm:HD} this implies that $\F$ can be pierced by $k+1$ points.

For the other direction, let $\F_0$ be a family that consists of $k+2$ distinct single-point sets, and $p-k-2$ copies of a segment that contains all the points. A straightforward computation shows that $\F_0$ satisfies the $(p,q)_r$ property with $r ={{p-k-2}\choose{q}} + (k+2){{p-k-2}\choose{q-1}}$, but cannot be pierced by $k+1$ points.
\end{proof}
Proposition~\ref{Prop:Dim1} will be useful for us in the next section.

\section{Proof of Theorem~\ref{Thm:Main-Soberon}}
\label{sec:Soberon}

In the proof of Theorem~\ref{Thm:Main-Soberon} we use a bootstrapping based on the technique presented by Montejano and Sober\'{o}n~\cite{MS11}. First we state a lemma of~\cite{MS11} on which we base our argument.

\subsection{The technique of~\cite{MS11} and an alternative proof of the Hadwiger-Debrunner theorem}
\label{sec:sub:MS}

\begin{lemma}\label{Lem:MS}
For any family $\F$ of convex sets in $\Re^d$, there exist $A_1,A_2,\ldots,A_d \in \F$ and a line $\ell$ such that if $C \in \F$ intersects $\cap_{i \neq j} A_i$ for all $1 \leq j \leq d$ then $C \cap \ell \neq \emptyset$.
\end{lemma}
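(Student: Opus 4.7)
The plan is to select the sets $A_1,\ldots,A_d\in\F$ by a greedy extremal procedure and then to produce $\ell$ from the resulting geometry. The guiding intuition is that each $A_i$ should ``anchor'' one coordinate direction, so that every $C\in\F$ satisfying the hypothesis is squeezed into a cone with apex on $\ell$, and a short barycentric computation pins $\mathrm{conv}(p_1,\ldots,p_d)\subseteq C$ to that line.

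Concretely, fix a generic basis $u_1,\ldots,u_d$ of $\Re^d$. At step $k$, let $A_k\in\F$ minimize $\min_{x\in A}\langle u_k,x\rangle$ among all $A\in\F$ that meet the partial intersection $A_1\cap\cdots\cap A_{k-1}$, with $a_k$ the minimizer and $H_k$ the corresponding supporting hyperplane. If at some step no such $A$ exists, then $B_j=\emptyset$ for some $j$, the hypothesis of the lemma is vacuous, and any $\ell$ works. Otherwise, set $x_0:=H_1\cap\cdots\cap H_d$, a single point by genericity of the basis, with $x_0\in A_1\cap\cdots\cap A_d$. Define $\ell$ to be the line through $x_0$ parallel to $u_1+u_2+\cdots+u_d$.

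To verify the lemma, suppose $C\in\F$ meets each $B_j$ and pick $p_j\in C\cap B_j$. Since $C$ meets $A_1\cap\cdots\cap A_{k-1}$ for every $k$ (because this set contains $B_d$), the extremal property of $A_k$ yields $\langle u_k,y\rangle\ge\langle u_k,a_k\rangle$ for every $y\in C$. In the coordinate system $(u_1,\ldots,u_d)$ centered at $x_0$, each $p_j$ therefore lies in the non-negative orthant, and by convexity so does the simplex $\mathrm{conv}(p_1,\ldots,p_d)\subseteq C$. A short barycentric computation then shows that $\ell$ pierces this simplex, completing the argument.

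The main obstacle lies in the final step: a bare positive-orthant argument is not strong enough on its own, so one must also exploit the additional positional information that $p_j\in B_j$ (not merely in $C$) together with the ordering of the greedy construction, to obtain a triangular system of coordinates in which $\ell$ is forced to cross $\mathrm{conv}(p_1,\ldots,p_d)$. A secondary technical issue is handling degeneracies (empty partial intersections, coincident supporting hyperplanes, or $x_0$ not uniquely determined), which can be resolved either by perturbing $\F$ slightly or by reducing directly to the vacuous case.
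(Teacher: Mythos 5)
Your construction picks $A_k$ greedily and then sets $x_0 = H_1 \cap \cdots \cap H_d$, claiming ``$x_0 \in A_1 \cap \cdots \cap A_d$''. This is false in general: $H_k$ supports $A_k$ only at the minimizer $a_k$, and the common point of the $d$ hyperplanes has no reason to lie in \emph{any} $A_k$ (in $\Re^2$, take $A_1$ the segment from $(0,10)$ to $(0,20)$, so $H_1$ is the $y$-axis, and $A_2$ with $y$-minimum at $(5,0)$, so $H_2$ is the $x$-axis; then $x_0=(0,0)\notin A_1\cup A_2$). Beyond that, the entire argument rests on the final ``barycentric computation'', and you candidly acknowledge that the positive-orthant containment of $\mathrm{conv}(p_1,\ldots,p_d)$ does not force that simplex to meet the diagonal line $\ell$; no replacement argument using $p_j\in B_j$ is actually given, and it is not hard to find configurations in $\Re^2$ where every $p_j$ lies strictly on the same side of $\ell$. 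So the proposal has two real gaps, not just technicalities.

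The paper's proof is structured differently in a way that dissolves both obstacles. One does not choose the $A_i$ one coordinate at a time; instead one picks the $d$-tuple minimizing (lexicographically) the lex-maximum point of its common intersection, so that the distinguished point $x_0 = \mathrm{lexmax}(\cap_i A_i)$ lies in $\cap_i A_i$ by construction. The line $\ell$ is then chosen through $x_0$ to separate the ``upper'' truncations $A_i' \setminus \{x_0\}$. The verification splits into two genuinely different cases: if $C$ meets $\cap_i A_i$, extremality of $x_0$ (together with the Helly-type fact that the lex-max of a nonempty intersection of convex sets equals the lex-max of some sub-$d$-tuple) forces $x_0 \in C$; if $C$ misses $\cap_i A_i$ then $C$ meets each truncation $A_i' \setminus \{x_0\}$, so the separating $\ell$ must cut $C$. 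Your greedy scheme produces neither the containment $x_0\in\cap_i A_i$ nor a case distinction of this type, so there is no apparent way to repair the argument along the lines you propose; you would essentially need to switch to the lex-extremal tuple.
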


Since our argument is partially based on the proof of Lemma~\ref{Lem:MS} presented in~\cite{MS11}, we recall the proof below. In the general case of families in $\Re^d$, the proof of~\cite{MS11} uses topological techniques. As we do not use these parts of the proof of~\cite{MS11}, we present the proof in the case of $\Re^2$ where the topological tools are not needed, and refer the reader to~\cite[Theorem~2.62]{HellyToday} for sketch of the proof in the general case. For sake of clarity, we formulate explicitly the $d=2$ case of Lemma~\ref{Lem:MS} whose proof we present.
\begin{lemma*}[Lemma~\ref{Lem:MS} for $d=2$]
For any family $\F$ of convex sets in $\Re^2$, there exist $A,B \in \F$ and a line $\ell$ such that if $C \in \F$ satisfies $A \cap C \neq \emptyset$ and $B \cap C \neq \emptyset$ then $C \cap \ell \neq \emptyset$.
\end{lemma*}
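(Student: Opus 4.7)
The plan is to use a one-direction extremal argument. Fix a coordinate axis (say the $x$-axis) and, for each $C\in\F$, let $f(C)=\min\{x : (x,y)\in C\}$ and $g(C)=\max\{x : (x,y)\in C\}$; these are finite since each $C$ is compact. Because $\F$ is finite, one can pick $A\in\F$ maximizing $f(\cdot)$ over $\F$, and $B\in\F$ minimizing $g(\cdot)$ over $\F$. My proposal is to take $\ell$ to be the vertical line $\{x=f(A)\}$ and then verify that the triple $(A,B,\ell)$ meets the required implication.

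To check the conclusion, pick any $C\in\F$ with $A\cap C\neq\emptyset$ and $B\cap C\neq\emptyset$. The intersection with $A$ supplies a point of $C$ with $x$-coordinate $\geq f(A)$, and the intersection with $B$ supplies a point of $C$ with $x$-coordinate $\leq g(B)$. I would split the verification into two cases according to the sign of $g(B)-f(A)$.

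If $g(B)\geq f(A)$, then by the extremality of $A$ and $B$, \emph{every} $D\in\F$ satisfies $f(D)\leq f(A)\leq g(B)\leq g(D)$; since $D$ is convex, its projection onto the $x$-axis is the interval $[f(D),g(D)]$, which contains $f(A)$, and hence $D$ meets $\ell$ (in particular $C$ does, independently of whether it meets $A$ and $B$). If instead $g(B)<f(A)$, then the two witnesses furnished above place points of $C$ on opposite sides of $\ell$, and convexity of $C$ forces $C$ to cross $\ell$. Either way the conclusion of the lemma holds.

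There is not really a hard step in the argument above — its only content is the observation that in $\Re^2$ a single-direction extremal choice already suffices, so no topology and no delicate separation argument is needed. The one subtlety is to use the same line $\ell=\{x=f(A)\}$ in both subcases, so that the choice of $(A,B,\ell)$ does not depend on which case occurs; the verification above was arranged with this in mind.
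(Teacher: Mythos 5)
Your proof is correct, and it takes a genuinely different and more elementary route than the paper's. The paper picks the pair $(A,B)$ minimizing the lexicographic maximum $x_0$ of $A\cap B$ over all intersecting pairs, takes $\ell$ through $x_0$ separating the parts of $A$ and $B$ lying lexicographically above $x_0$, and then argues by cases according to whether $C\ni x_0$, invoking a Helly-type fact about lexicographic maxima of intersections (Lemma~8.1.2 in Matou\v{s}ek). Your one-coordinate extremal argument avoids all of that and, in your first case, even shows that \emph{every} member of $\F$ meets $\ell$. The trade-off is twofold: the paper's lexicographic construction is the one that generalizes to $\Re^d$ (via a topological step), and, more relevantly here, this paper does not use Lemma~\ref{Lem:MS} only as a black box — the alternative proof of Theorem~\ref{Thm:HD} and the proof of Theorem~\ref{Thm:Main-Soberon} both rely on the \emph{specific} $x_0$ and $A_1,\dots,A_d$ produced by the lexicographic argument (in particular on the fact that every $C$ with $x_0\notin C$ is disjoint from $\bigcap_i A_i$). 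Your $A$, $B$, $\ell$ do not come equipped with that extra structure, so while they certainly prove the displayed statement, they would not support those downstream arguments. As a self-contained proof of the $d=2$ lemma, though, your argument is clean and correct; the only housekeeping is that the existence of $f(C),g(C)$ and of the extremal $A,B$ uses the standing compactness and finiteness assumptions stated in the paper's introduction, which the lemma as phrased omits.
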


\begin{proof}
If some $A,B \in \F$ satisfy $A \cap B = \emptyset$ then the assertion clearly holds with $A,B$ and any line $\ell$ that separates between $A$ and $B$. Thus, we assume that $A \cap B \neq \emptyset$ for all $A,B \in \F$.

For any pair $A,B \in \F$ such that $A \cap B \neq \emptyset$, let $\mathrm{lexmax}(A,B)$ denote the lexicographic maximum of $A \cap B$. Let $x_0 = \mathrm{lexmin} \{\mathrm{lexmax}(A,B): A,B \in \F, A \cap B \neq \emptyset\}$ (i.e., the lexicographic minimum amongst $\mathrm{lexmax}(A,B)$), and let $A,B \in \F$ be such that $x_0= \mathrm{lexmax} (A,B)$. Denote $H = \{x \in \Re^2: x \geq_{\mathrm{lex}} x_0\}$, and let $A'=A \cap H$, $B' = B \cap H$. As $A',B'$ are convex sets and $A' \cap B' = \{x_0\}$, there exists a line $\ell$ with $x_0 \in \ell$ that separates between $A' \setminus \{x_0\}$ and $B' \setminus \{x_0\}$. We claim that the assertion holds with $A,B,\ell$. To see this, we consider two cases:
\begin{enumerate}
\item $C \cap (A \cap B) \neq \emptyset$. We claim that $x_0 \in C$, and thus $C \cap \ell \neq \emptyset$. Assume to the contrary $x_0 \not \in C$. Note that for any family of convex sets $C_1,C_2,\ldots,C_m \subset \Re^2$ such that $\cap_{i=1}^m C_i \neq \emptyset$, there exist $1 \leq k < l \leq m$ such that $\mathrm{lexmax}(\cap_{i=1}^m C_i) = \mathrm{lexmax}(C_k \cap C_l)$. (This is a straightforward application of Helly's theorem; see~\cite{Matousek}, Lemma~8.1.2). In the case $\{C_1,\ldots,C_m\}=\{A,B,C\}$, by assumption $x_1:= \mathrm{lexmax}(A \cap B \cap C) \neq \mathrm{lexmax}(A \cap B)$, and thus w.l.o.g. $x_1 = \mathrm{lexmax}(A \cap C)$. It follows that $x_1 \in A \cap B$, and thus, $x_1 <_{\mathrm{lex}} x_0 = \mathrm{lexmax}(A \cap B)$. A contradiction to the definition of $x_0$. Hence, $x_0 \in C$, as asserted.

\item $C \cap (A \cap B) = \emptyset$. As $\mathrm{lexmax}(A \cap C) >_{\mathrm{lex}} x_0$, we have $(A' \setminus \{x_0\}) \cap C \neq \emptyset$. Similarly, we have $(B' \setminus \{x_0\}) \cap C \neq \emptyset$. As $\ell$ separates between $A' \setminus \{x_0\}$ and $B' \setminus \{x_0\}$, this implies $C \cap \ell \neq \emptyset$, as asserted.
\end{enumerate}
\end{proof}

\begin{remark}\label{Rem:Auxi}
When the proof of Case~(1) of Lemma~\ref{Lem:MS} is applied for a general $d$ (as was done in~\cite{MS11} and as we do below), we define $x_0 = \mathrm{lexmin} \{\mathrm{lexmax}(\cap_{i=1}^d A_i): A_1,\ldots,A_d \in \F, \cap_{i=1}^d A_i \neq \emptyset\}$ (where the system of coordinates is chosen such that all lexicographic maxima/minima are defined uniquely). We also replace $A \cap B$ by $\cap_{i=1}^d A_i$, and replace `each of $A$ and $B$' by `each of $\cap_{i \neq j} A_i$, $1 \leq j \leq d$'.
\end{remark}

The argument used in Case~(1) above can be used to obtain a simple proof of the Hadwiger-Debrunner theorem (Theorem~\ref{Thm:HD} above), as follows:
\begin{proof}[Alternative proof of Theorem~\ref{Thm:HD}]
Let $\F$ be a family of at least $p$ compact convex sets in $\Re^d$ that satisfies the $(p,q)$ property, and let $x_0,A_1,A_2,\ldots,A_d$ be chosen as in the proof of Lemma~\ref{Lem:MS} and in Remark~\ref{Rem:Auxi}.

Consider the family $\G=\{C \in \F: x_0 \not \in C\}$. We consider two cases:
\begin{itemize}
\item $|\G| \geq p-d$. We claim that in this case, $\G$ satisfies the $(p-d,q-d+1)$ property. Indeed, let $C_1,C_2,\ldots,C_{p-d} \in \G$, and consider the family $\{C_1,\ldots,C_{p-d},A_1,A_2,\ldots,A_d\}$. By assumption, it contains an intersecting $q$-tuple. This $q$-tuple cannot contain all of $A_1,A_2,\ldots,A_d$, as by the argument of Case~(1) above, each of $C_1,\ldots,C_{p-d}$ is disjoint with $\cap_{i=1}^d A_i$. Thus, $\{C_1,C_2,\ldots,C_{p-d}\}$ contains an intersecting $(q-d+1)$-tuple.

\item $|\G| = p-d-t$ for $t>0$. By the same reasoning as in the previous case, $\G$ contains an intersecting $(q-d-t+1)$-tuple that can be pierced by a single point, and thus, it can be trivially pierced by $(p-d-t)-(q-d-t+1)+1=p-q$ points.
\end{itemize}
As $\F \setminus \G$ is pierced by $x_0$, combining the two cases we get $\HD(p,q) \leq \max(\HD(p-2,q-1)+1,p-q+1)$. Since $\HD(d+1,d+1)=1$ by Helly's theorem, it follows by induction that if $q > \frac{d-1}{d}p +1$ then $\HD(p,q) \leq p-q+1$.
\end{proof}

\subsection{The bootstrapping technique}
\label{sec:sub:BS}

In~\cite{MS11}, the authors show that if $\F$ satisfies the $(p,q)_r$ property with $r> {{p}\choose{q}} - {{p+1-d}\choose{q+1-d}}$ then (in the notations of Lemma~\ref{Lem:MS}) the family $\G' = \{C \cap \ell: C \in \F, x_0 \not \in C\}$ satisfies the $(p-d,q-d+1)$ property, and thus, by Theorem~\ref{Thm:HD} in dimension 1, $\F$ can be pierced by $p-q+1$ points. In our bootstrapping argument, we show instead that the family $\G'$ satisfies the $(p-q+1,2)_{r'}$ property for a sufficiently large $r'$, and then an improved piercing number for $\F$ can be derived from Proposition~\ref{Prop:Dim1}. We will use the following.
\begin{definition}
Let $\F$ be a family of compact convex sets in $\Re^d$, $|\F| \geq p$, and let $\ell$ be a line. $\F$ is said to satisfy the $(p,q)_r$ property \emph{through $\ell$} if any $p$-tuple of sets in $\F$ contains at least $r$ $q$-tuples that intersect on $\ell$.
\end{definition}

\begin{lemma}\label{Lem:L}
If a family $\F$ satisfies the $(p,2)_{r_0}$ property through $\ell$ where $r_0={{p-k-2}\choose{2}} + (k+2){{p-k-2}\choose{1}}+1$, then $\F$ can be pierced by $k+1$ points.
\end{lemma}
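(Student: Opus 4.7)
My plan is to reduce Lemma~\ref{Lem:L} to a one-dimensional piercing problem on $\ell$ and invoke Proposition~\ref{Prop:Dim1}. I would begin by introducing the restricted family $\tilde{\F} := \{C \cap \ell : C \in \F\}$. Since each $C \in \F$ is compact and convex, each element of $\tilde{\F}$ is either empty or a closed segment on $\ell \cong \Re$, so $\tilde{\F}$ can be viewed as a family of segments on the real line.

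I would then check that $\tilde{\F}$ inherits the ordinary one-dimensional $(p,2)_{r_0}$ property. Given any $p$-subfamily of $\tilde{\F}$, the corresponding $p$-subfamily of $\F$ falls under the hypothesis and hence contains at least $r_0$ pairs $(C_a, C_b)$ with $C_a \cap C_b \cap \ell \neq \emptyset$; these are exactly the pairs $(C_a \cap \ell, C_b \cap \ell)$ in $\tilde{\F}$ whose intersection is non-empty. Since $r_0 = \binom{p-k-2}{2} + (k+2)\binom{p-k-2}{1} + 1$ matches the threshold of~\eqref{Eq:Dim1} for $q = 2$, Proposition~\ref{Prop:Dim1} provides a set $P \subset \ell$ of size $k+1$ that pierces $\tilde{\F}$. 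Because $C \cap \ell \subseteq C$ for every $C \in \F$, the same $P$ then pierces $\F$.

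The main (and only minor) obstacle is that a member $C \in \F$ with $C \cap \ell = \emptyset$ contributes an empty segment to $\tilde{\F}$, which cannot be pierced by any point of $\ell$. In the intended bootstrapping application, the line $\ell$ is the one supplied by Lemma~\ref{Lem:MS}, and the sub-family handed to Lemma~\ref{Lem:L} has already been filtered so that every remaining set meets $\ell$, which removes the difficulty. Alternatively, the hypothesized size of $r_0$ rules out this pathology directly: if a single $C_0 \in \F$ satisfied $C_0 \cap \ell = \emptyset$, then in any $p$-tuple containing $C_0$ the at least $r_0$ pairs that intersect on $\ell$ would all have to avoid $C_0$, forcing $r_0 \leq \binom{p-1}{2}$, which fails in the parameter range where the lemma is invoked. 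In either guise, $P$ pierces $\F$ by $k+1$ points, completing the proof.
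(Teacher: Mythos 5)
Your proof restricts $\F$ to $\ell$ and applies Proposition~\ref{Prop:Dim1} directly, which leaves the members of $\F$ disjoint from $\ell$ unpierced; both fallback arguments you offer for dismissing these are incorrect, and this is precisely where the paper's proof does additional work.

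First, in the application inside the proof of Theorem~\ref{Thm:Main-Soberon}, the family handed to Lemma~\ref{Lem:L} is $\F' = \{C \in \F : x_0 \notin C\}$, which has \emph{not} been filtered to remove sets missing $\ell$: Lemma~\ref{Lem:MS} guarantees $C \cap \ell \neq \emptyset$ only for those $C$ that intersect every $\cap_{i\neq j}A_i$, and $\F'$ certainly contains other sets. Second, your arithmetic claim that the hypothesis forces $r_0 \leq \binom{p-1}{2}$ to fail is false in general. Writing $r_0 = \binom{p}{2} - \binom{k+2}{2} + 1$, the inequality $r_0 \leq \binom{p-1}{2}$ is equivalent to $p \leq \binom{k+2}{2}$, so whenever $k$ is not small compared to $p$ (in particular for $k$ near $p-q-1$, which Theorem~\ref{Thm:Main-Soberon} explicitly allows), the hypothesis is perfectly consistent with having one or more sets disjoint from $\ell$. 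In fact, what the hypothesis \emph{does} force is only $h := |\{C\in\F : C\cap\ell=\emptyset\}| \leq k$, not $h=0$. The paper's proof exploits exactly this slack: it removes $\h$, observes that $r_0$ still exceeds the Proposition~\ref{Prop:Dim1} threshold for the pair $(p-h,\,k-h)$ (since $(p-h)-(k-h)-2 = p-k-2$ and $k-h+2 \leq k+2$), so the restriction of $\F\setminus\h$ to $\ell$ is pierced by $k-h+1$ points, and then spends the remaining $h$ points one per element of $\h$, totalling $k+1$. Without this accounting, your argument only pierces $\F\setminus\h$, and there is no budget left for $\h$.
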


\begin{proof}
Let $\h = \{C \in \F: C \cap \ell = \emptyset\}$, and denote $h=|\h|$. The family $\F' = \{C \cap \ell: C \in \F \setminus \h\}$ clearly satisfies the $(p-h,2)_{r_0}$ property. As
\begin{align*}
{{p-k-2}\choose{2}} + (k+2){{p-k-2}\choose{1}}+1 \geq &{{(p-h)-(k-h)-2}\choose{2}} \\
&+ ((k-h)+2){{(p-h)-(k-h)-2}\choose{1}}+1,
\end{align*}
it follows that $\F'$ is a family of segments on $\ell$ that satisfies the $(p-h,2)_{r'}$ property with $r' = {{(p-h)-(k-h)-2}\choose{2}} + ((k-h)+2){{(p-h)-(k-h)-2}\choose{1}}+1$. Thus, by Proposition~\ref{Prop:Dim1}, $\F'$ can be pierced by $k-h+1$ points, and thus, $\F$ can be pierced by $k+1$ points, as asserted.
\end{proof}

Now we are ready to prove Theorem~\ref{Thm:Main-Soberon}.

\begin{proof}[Proof of Theorem~\ref{Thm:Main-Soberon}]
Let $\F$ be a family that satisfies the assumption of the theorem, and let $x_0,A_1,A_2,\ldots,A_d,\ell$ be chosen as in the proof of Lemma~\ref{Lem:MS}, i.e., $\{A_i\}_{i=1}^d$ is the $d$-tuple in which the $\mathrm{lexmin} (\mathrm{lexmax}(\cdot))$ is attained. Denote $\F' = \{C \in \F: x_0 \not \in C\}$. We want to show that $\F'$ satisfies the $(p-q+1,2)_{r_0}$ property through $\ell$, where $r_0={{(p-q+1)-k-2}\choose{2}} + (k+2){{(p-q+1)-k-2}\choose{1}}+1$. By Lemma~\ref{Lem:L}, this would imply that $\F'$ can be pierced by $k+1$ points, and thus, $\F$ can be pierced by $k+2$ points, as asserted.

By the choice of $m_0(k)$, it is sufficient to show that $\F'$ satisfies the $(p-q+1,2)_{r'}$ property through $\ell$, where $r'={{m_0(k)+1}\choose{2}}$. Furthermore, by Lemma~\ref{Lem:MS} it is sufficient to show that among any $p-q+1$ elements of $\F$ there exist at least ${{m_0(k)+1}\choose{2}}$ distinct pairs of elements that intersect $\cap_{i \neq j} A_i$ for all $1 \leq j \leq d$.

As $\F$ is non-$(p-q)$-degenerate, we have $|\F'| \geq p-q+1$. Let $\C = \{C_1,C_2,\ldots,C_{p-q+1}\} \subset \F'$, and let $\D=\{D_1,D_2,\ldots,D_{q-d-1}\} \subset \F $ such that $\D \cap (\C \cup \{A_1,A_2,\ldots,A_d\}) = \emptyset$. We have $|\C \cup \D \cup \{A_1,A_2,\ldots,A_d\}|=p$, and thus, the family $\C \cup \D \cup \{A_1,A_2,\ldots,A_d\}$ contains at least $r$ intersecting $q$-tuples.

\medskip \noindent
Note that $q$-tuples of elements of $\C \cup \D \cup \{A_1,A_2,\ldots,A_d\}$ can be divided into three groups:
\begin{enumerate}
\item $q$-tuples that contain less than $d-1$ of the sets $A_1,\ldots,A_d$.

\item $q$-tuples that contain exactly $d-1$ of the sets $A_1,\ldots,A_d$.

\item $q$-tuples that contain all the sets $A_1,\ldots,A_d$.
\end{enumerate}
We observe that none of the intersecting $q$-tuples belong to the third group, as by the proof of Lemma~\ref{Lem:MS} above (specifically, by Lemma~8.1.2 in~\cite{Matousek} that applies for a general $d$), all elements of $\C$ are disjoint with $\cap_{i=1}^d A_i$, and $\D$ contains only $q-d-1$ elements. This implies that the total number of intersecting $q$-tuples is at most ${{p}\choose{q}}-{{p-d}\choose{q-d}}$. Furthermore, since $r$ satisfies~\eqref{Eq:Ex-Soberon1}, the number of \emph{non-intersecting} $q$-tuples in groups 1 and 2 is at most
\begin{align}\label{Eq:Aux00}
\begin{split}
&{{p}\choose{q}}-{{p-d}\choose{q-d}} - \left({{p}\choose{q}} - {{p-d+1}\choose{q-d+1}} + 1 + {{q-d-2+m_0(k)}\choose{q-d}} + {{q-d-1+m_0(k)}\choose{q-d+1}}\right) \\
&= {{p-d}\choose{q-d+1}} - \left({{q-d-2+m_0(k)}\choose{q-d}} + {{q-d-1+m_0(k)}\choose{q-d+1}}+1\right).
\end{split}
\end{align}
For each $\{S_1,S_2,\ldots,S_{q-d+1}\} \subset \C \cup \D$, we define a $d$-tuple
\begin{align*}
P_{\{S_1,\ldots,S_{q-d+1}\}} =& \{\{S_1,\ldots,S_{q-d+1},A_2,A_3,\ldots,A_{d}\},\{S_1,\ldots,S_{q-d+1},A_1,A_3,\ldots,A_{d}\} \\ &,\ldots,\{S_1,\ldots,S_{q-d+1},A_1,A_2,\ldots,A_{d-1}\}\}.
\end{align*}
Denote by $P'$ the set of all $\{S_1,S_2,\ldots,S_{q-d+1}\} \subset \C \cup \D$ for which all $d$ elements of $P_{\{S_1,\ldots,S_{d-q+1}\}}$ are intersecting. We claim that
\begin{equation}\label{Eq:P}
|P'| \geq {{q-d-2+m_0(k)}\choose{q-d}} + {{q-d-1+m_0(k)}\choose{q-d+1}}+1.
\end{equation}
Indeed, note that the $q$-tuples in group 2 are naturally divided into $d$ classes according to the set $A_i$ they miss. Each class consists of ${{p-d}\choose{q-d+1}}$ $q$-tuples. It is clear that for a given number of intersecting $q$-tuples, $|P'|$ is minimized when all non-intersecting $q$-tuples of group 2 belong to the same class. In that case, $|P'|$ equals to the number of remaining elements in that class, and thus by Equation~\eqref{Eq:Aux00},
\[
|P'| \geq {{p-d}\choose{q-d+1}} - \left({{p-d}\choose{q-d+1}} - \left({{q-d-2+m_0(k)}\choose{q-d}} + {{q-d-1+m_0(k)}\choose{q-d+1}}+1\right) \right),
\]
meaning that~\eqref{Eq:P} holds.

By the definition of $P'$, each element in $P'$ contains $q-d+1$ sets that intersect $\cap_{i \neq j} A_i$, for all $1 \leq j \leq d$. As $|\D|=q-d-1$, at least two of these sets belong to $\C$. Hence, each element of $P'$ contains at least one pair of elements in $\C$ that intersect $\cap_{i \neq j} A_i$, for all $1 \leq j \leq d$. Recall that we want to prove that there are at least ${{m_0(k)+1}\choose{2}}$ such pairs.

It is easy to see that for a given number of elements in $P'$, the number of distinct pairs $(C,C') \in \C$ contained in elements of $P'$ is minimized when these elements are `packed together'. In particular, the maximal possible number of elements in $P'$ such that the number of distinct pairs is \emph{smaller than}  ${{m_0(k)+1}\choose{2}}$ is attained when we take some $C_1,C_2,\ldots,C_{m_0(k)+1} \in \C$, and define
\begin{align}\label{Eq:P'}
\begin{split}
P''= \{S \subset \C \cup \D: \left(|S|=q-d+1 \right) \wedge &\left(S \cap \C \subset \{C_1,C_2,\ldots,C_{m_0(k)+1}\} \right) \wedge \\
&\wedge \left(\{C_{m_0(k)},C_{m_0(k)+1}\} \not \subset S \right) \}.
\end{split}
\end{align}
In this case, we have $|P''|={{q-d-2+m_0(k)}\choose{q-d}} + {{q-d-1+m_0(k)}\choose{q-d+1}}$. Indeed, since $|\D|=q-d-1$, then among the $(q-d+1)$-tuples $S$ for which $S \cap \C \subset \{C_1,C_2,\ldots,C_{m_0(k)+1}\}$, there are ${{q-d-1+m_0(k)}\choose{q-d+1}}$ that do not include $C_{m_0(k)+1}$, and ${{q-d-2+m_0(k)}\choose{q-d}}$ that include $C_{m_0(k)+1}$ and miss $C_{m_0(k)}$. Therefore, Equation~\eqref{Eq:P} implies that $\C$ must contain at least ${{m_0(k)+1}\choose{2}}$ distinct pairs that intersect $\cap_{i \neq j} A_i$ for all $1 \leq j \leq d$, and thus, by Lemma~\ref{Lem:MS}, $\F'$ satisfies the $(p-q+1,2)_{{{m_0(k)+1}\choose{2}}}$ property through $\ell$. This completes the proof.
\end{proof}

\section*{Acknowledgements}

The authors are grateful to Tsvi and Ronit Lubin for their programming help during the research.

\end{document}